\colorlet{mylinkcolor}{violet}
\colorlet{mycitecolor}{YellowOrange}
\colorlet{myurlcolor}{Aquamarine}
\newcommand\restr[2]{{% we make the whole thing an ordinary symbol
  \left.\kern-\nulldelimiterspace % automatically resize the bar with \right
  #1 % the function
  \vphantom{\big|} % pretend it's a little taller at normal size
  \right|_{#2} % this is the delimiter
  }}
\newcommand{\set}[1]{\{#1\}}
\newcommand{\calB}{\mathcal{B}}
\newcommand{\calC}{\mathcal{C}}
\newcommand{\Oh}{\mathcal{O}} 
\newcommand{\NN}{\mathbb{N}}
\DeclareMathOperator\tw{tw}
\DeclareMathOperator\stw{stw}
\DeclareMathOperator\td{td}
\DeclareMathOperator\clos{clos}
\DeclareMathOperator\wcol{wcol}
\DeclareMathOperator\scol{scol}
\DeclareMathOperator\col{col}
\DeclareMathOperator\dist{dist}
\DeclareMathOperator\WReach{WReach}
\DeclareMathOperator\dom{dom}
\let\leq\leqslant
\let\geq\geqslant
\let\subset\subseteq
\let\epsilon\varepsilon
\title{Improved bounds for weak coloring numbers}
\author{Gwena\"el Joret\thanks{Supported by an ARC grant from the Wallonia-Brussels Federation of Belgium and a CDR grant from the National Fund for Scientific Research (FNRS).}\\
\small Computer Science Department\\[-0.8ex]
\small Universit\'e libre de Bruxelles\\[-0.8ex] 
\small Brussels, Belgium\\
\small\tt gjoret@ulb.ac.be\\
\and
Piotr Micek\thanks{Supported by the National Science Center of Poland under grant no.\ 2018/31/G/ST1/03718.}\\
\small Theoretical Computer Science Department \\[-0.8ex]
\small Jagiellonian University \\[-0.8ex] 
\small Krak\'ow, Poland\\
\small\tt piotr.micek@uj.edu.pl\\
}
\begin{document}

\maketitle

\begin{abstract}
Weak coloring numbers generalize the notion of degeneracy of a graph. They were introduced by Kierstead \& Yang in the context of games on graphs. Recently, several connections have been uncovered between weak coloring numbers and various parameters studied in graph minor theory and its generalizations. In this note, we show that for every fixed $k\geq1$, the maximum $r$-th weak coloring number of a graph with simple treewidth $k$ is $\Theta(r^{k-1}\log r)$. As a corollary, we improve the lower bound on the maximum $r$-th weak coloring number of planar graphs from $\Omega(r^2)$ to $\Omega(r^2\log r)$, and we obtain a tight bound of $\Theta(r\log r)$ for outerplanar graphs.
\end{abstract}

\maketitle

\section{Introduction}
All graphs in this paper are finite, simple, and undirected. 
The \emph{coloring number} $\col(G)$ of a graph $G$ is the least number $k$ such that $G$ has a vertex ordering in which each vertex is preceded by at most $k-1$ of its neighbors. 
If we color the vertices of $G$ one by one following such a vertex ordering, assigning to each vertex the smallest color not already used on its neighbors, then we use at most $\col(G)$ colors.  
This proves the first inequality in the following observation:
\[
\chi(G) \leq \col(G) \leq \Delta(G)+1,
\]
where $\chi(G)$ denotes the chromatic number of $G$ and $\Delta(G)$ denotes the maximum degree of $G$. 
The focus of this paper is a family of graph parameters, the {\em weak coloring numbers}, which can be thought of as relaxations of the coloring number.

The \emph{length} of a path is the number of its edges.
For two vertices $u$ and $v$ in a graph $G$, an $u$--$v$ \emph{path} is a path in $G$ with ends in $u$ and $v$. 
Let $G$ be a graph and let $\sigma$ be an ordering of the vertices of $G$. 
For $r\in\set{0,1,2,\ldots}\cup\set{\infty}$ and two vertices $u$ and $v$ of $G$, we say that
$u$ is \emph{weakly $r$-reachable} from $v$ in $\sigma$, if there exists
an $u$--$v$ path of length at most $r$ 
such that for every vertex $w$ on the path, $u\leq_{\sigma} w$.
See Figure~\ref{fig:wcol-def}.
The set of vertices that are weakly $r$-reachable from a vertex $v$ in $\sigma$ is denoted by $\WReach_r[G, \sigma, v]$. 
We define
\begin{align*}
\wcol_r(G, \sigma) &= \max_{v \in V(G)}\ |\WReach_r[G, \sigma, v]|,\\
\wcol_r(G) &= \min_{\sigma}\ \wcol_r(G, \sigma),
\end{align*}
where $\sigma$ ranges over the set of all vertex orderings of $G$.
We call $\wcol_r(G)$ the $r$-\emph{th weak coloring number} of $G$.
Clearly, 
\[
\col(G) = \wcol_1(G) \leq \wcol_2(G) \leq \cdots\leq \wcol_{\infty}(G).
\]
\begin{figure}[!h]
    \centering
    \includegraphics{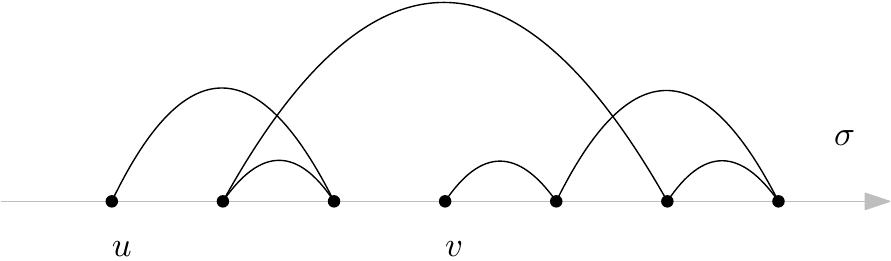}
    \caption{$u$ is $6$-weakly reachable from $v$ in $\sigma$.}
    \label{fig:wcol-def}
\end{figure}

Weak coloring numbers were introduced by Kierstead and Yang~\cite{KY03} in 2003, though a parameter similar to $\wcol_2(G)$ is already present 
in the work of Chen and Schelp~\cite{CS93}.
This family of parameters gained considerable attention when Zhu~\cite{Zhu09} proved that it captures important and robust notions of sparsity, namely graph classes with bounded expansion and nowhere dense classes. 
Specifically, a class of graphs $\calC$ has bounded expansion if and only if 
there exists a function $f:\NN\to\NN$ such that for every graph $G$ in $\calC$ and every $r\in\NN$ we have $\wcol_r(G) \leq f(r)$. 
Classes of bounded expansion include, in particular, planar graphs, graphs of bounded treewidth, and proper minor-closed classes; 
see the book by Ne\v{s}et\v{r}il and Ossona de Mendez~\cite{sparsity} or the recent lecture notes of Pilipczuk, Pilipczuk, and Siebertz~\cite{notes} for more information on this topic. 

The purpose of this short paper is to give improved bounds on weak coloring numbers for some natural classes of graphs, and to emphasize some open problems about the best possible bounding functions. 
Let us start with a quick review of previous works. 
The $r$-th weak coloring number of graphs with treewidth at most $k$ is at most $\binom{r+k}{k}$ and this is tight, as proved by Grohe, Kreutzer, Rabinovich, Siebertz, and Stavropoulos~\cite{Grohe15}. 
Since treewidth-$2$ graphs are planar, this shows that there are planar graphs $G$ with $\wcol_r(G)$ in $\Omega(r^2)$. 
The best upper bound for planar graphs is $\binom{r+2}{2} (2r+1) \in \Oh(r^3)$, due to Van den Heuvel, Ossona de Mendez, Quiroz, Rabinovich, and Siebertz~\cite{vdHetal17} and proved using a graph decomposition known as ``chordal partition''.\footnote{
  We note that these chordal partitions are precursors of, and an inspiration for, the so-called product structure theorem for planar graphs, which states that every planar graph is a subgraph of the strong product of a treewidth-$8$ graph with a path~\cite{DJMMUW20} (see~\cite{DHJLW21} for a recent survey). A straightforward application of the product structure theorem gives a $\Oh(r^3\log r)$ bound on the $r$-th weak coloring number of planar graphs. We are not aware of any better bound using this approach. 
}  
The latter authors also showed an $\left(g + \binom{r+2}{2}\right) (2r+1)$ upper bound for graphs of Euler genus $g$, which is $\Oh(r^3)$ for fixed $g$.\footnote{Let us recall that the Euler genus of a graph $G$ is the minimum Euler genus of a surface $G$ embeds in, where the sphere $\mathbb{S}_h$ with $h$ handles has Euler genus $2h$, and the sphere $\mathbb{N}_h$ with $h$ crosscaps has Euler genus $h$. We note that the result in~\cite{vdHetal17} is phrased in terms of orientable genus, not Euler genus. However, their proof also works in the nonorientable case.}  
In Table~\ref{tab:state-of-art}, we summarize the state of the art and present our contributions.

 \begin{table}[!ht]
\begin{tabular}{l|cl}     
\toprule
class $\mathcal{C}$ & 
$\max\{\wcol_r(G):G\in\mathcal{C}\}$ \\
%max.\ of $\wcol_r(G)$ over all graphs $G\in\mathcal{C}$ \\
  \midrule \\[0.1ex]
%paths & $\lceil\log r\rceil+1$ \quad and \quad $\lceil\log r\rceil+2$& our contribution\\[2ex]
outerplanar &$\Theta(r\log r)$& our contribution\\[2ex]
planar & $\Omega(r^2)$ \quad and \quad $\Oh(r^3)$ & 
\begin{tabular}{@{}l}
upper bound: 
Van den Heuvel,\\
Ossona de Mendez, Quiroz,\\
Rabinovich, Siebertz~\cite{vdHetal17}
\end{tabular}\\[4ex]
& $\Omega(r^2\log r)$&our contribution\\[4ex]
Euler genus $g$ & $\Oh_g(r^3)$ & \cite{vdHetal17}\\[4ex]
$K_{3,k}$-minor free & $\Oh_k(r^3)$ & Van den Heuvel and Wood~\cite{vdHW18}\\[4ex]
$K_{s,k}$-minor free ($s\leq k$) & $\Oh_{k}(r^{s+1})$ & \cite{vdHW18}\\[4ex]
treewidth $\leq k$ & $\displaystyle\binom{r+k}{k}$&
\begin{tabular}{@{}l}
Grohe, Kreutzer, Rabinovich, \\
Siebertz, Stavropoulos~\cite{Grohe15}
\end{tabular}\\[4ex]
\begin{tabular}{@{}l}simple\\[0.5ex] treewidth $\leq k$\end{tabular} 
& \begin{tabular}{@{}l}$\Omega_k(r^{k-1}\log r)$\\[0.5ex] and $\Oh(r^{k-1}\log r)$ \end{tabular} 
%&$\Omega_k(r^{k-1}\log r)$ \quad and \quad $\Oh(r^{k-1}\log r)$ 
&our contribution\\[4ex]
$K_k$-minor free &$\Omega_k(r^{k-2})$ \quad and \quad $\Oh(r^{k-1})$ & \cite{Grohe15,vdHetal17}\\[2ex]
%paths & $\left\lceil\log r\right\rceil + 1$&&$\left\lceil\log r\right\rceil + 2$&our contribution\\[3pt]
\begin{tabular}{@{}l}max.\ degree $\leq\Delta$\\[0.5ex] for $\Delta\geq4$\end{tabular} 
& $\Omega\left(\left(\frac{\Delta-1}{2}\right)^r\right)$&\cite{Grohe15}\\
\bottomrule
\end{tabular}
~\\[1ex]
\caption{Bounds on the maximum $r$-th weak coloring numbers for some graph classes. The subscripts in the asymptotic notations indicate that the hidden constant factors depend on the corresponding parameters.} 
\label{tab:state-of-art}
\end{table}

As a side remark, we note that all the proofs of the upper bounds mentioned above for particular classes of graphs give orderings that are {\em universal}, in the sense that they do not depend on $r$, only on the graphs. 
In fact, Van den Heuvel and Kierstead~\cite{vdHK21} recently proved the existence of universal orderings for weak coloring numbers in classes with bounded expansion:  
There exists a function $f:\NN\to\NN$ such that for every graph $G$ there is an ordering $\sigma^*$ of the vertices of $G$ so that for every $r\geq0$ we have $\wcol_r(G,\sigma^*) \leq f(r,\wcol_{2r}(G))$ (see also~\cite[Chapter 2, Theorem~4.3]{notes}). 

Weak coloring numbers have been used as a tool capturing the right  structures on a way to algorithmic or combinatorial results for various classes of sparse graphs. 
We mention three examples from the literature. 

The \emph{exact distance-$p$ graph} $G^{[\#p]}$ of a graph $G$ is the graph on the same vertex set as $G$ and where $uv$ is an edge in $G^{[\#p]}$ if $u$ and $v$ are at distance exactly $p$ in $G$. 
Van den Heuvel, Kierstead, and Quiroz~\cite{vdHKQ19} proved that for every graph $G$ and every odd natural number $p$, we have
$\chi(G^{[\#p]}) \leq \wcol_{2p-1}(G)$. 
The power of this bound can be observed when we note that for $p=3$ and a planar graph $G$, the best previously known bound was $\chi(G^{[\#3]})\leq 5\cdot 2^{20\, 971\, 522}$.
Using this bound with the bound for planar graphs mentioned above gives  $\chi(G^{[\#3]})\leq \wcol_{5}(G) \leq \binom{5+2}{2}\cdot(2\cdot5+1) = 221$.

\emph{Dimension} is a key measure of a poset's complexity. 
The \emph{cover graph} of a poset is its Hasse diagram taken as an undirected graph, thus two elements $x$ and $y$ of the poset are adjacent in its cover graph if $x$ and $y$ are comparable and there is no third element sandwiched between them in the poset relation.
The \emph{height} of a poset is the maximum size of a chain in the poset.
Joret, Ossona de Mendez, Micek, and Wiechert~\cite{JMOdMW19}  proved that 
$\dim(P) \leq 4^{c}$, where $c=\wcol_{3h-3}(G)$, $h$ is the height of $P$, and $G$ is the cover graph of $P$.
This result implies and generalizes a series of previous works on poset dimension, and with a much simpler proof, suggesting that weak coloring numbers are the right tool to use in this context.  
Weak coloring numbers are also used in~\cite{JMOdMW19} to show that the property of being nowhere dense for a graph class can be captured by looking at the dimension of posets whose cover graphs are in the class.

Our third example concerns a generalization of domination and independence numbers. 
The {\em $k$-domination number $\dom_k(G)$} of a graph $G$ is the minimum size of a vertex subset $X$ of $G$ such that every vertex of $G$ is at distance at most $k$ from $X$. 
The {\em $d$-independence number $\alpha_d(G)$} of $G$ is the maximum size of a subset $S$ of vertices of $G$ such that every two vertices in $S$ are at distance strictly greater than $d$ in $G$. 
Observe that the $2k$-independence number is a lower bound on the $k$-domination number, that is, $\alpha_{2k}(G) \leq \dom_k(G)$. 
Dvo\v{r}\'ak~\cite{D13} showed that the two parameters are in fact tied to each other when the weak coloring numbers are bounded: $\dom_k(G)\leq (\wcol_{2k}(G))^2 \cdot \alpha_{2k}(G)$. 
In a subsequent work, Dvo\v{r}\'ak~\cite{D19} also proved that $\dom_k(G)\leq 4(\wcol_{k}(G))^4 \wcol_{2k}(G) \cdot \alpha_{2k}(G)$, which is a better upper bound when $\wcol_{k}(G)$ is much smaller than $\wcol_{2k}(G)$. 
The proofs of these results are algorithmic and lead to approximation algorithms for computing $\dom_k(G)$ and $\alpha_{2k}(G)$ in graph classes with bounded expansion. 
Finally, we note that weak coloring numbers were also used in~\cite{DDFKLPPRVS16} to obtain small kernels for the problem of computing the $k$-domination number for graph classes with bounded expansion, see also~\cite{EGKKPRS17} for related results.

Let us now turn to our contributions. 
First we consider paths, for which we determine the optimal bound up to an additive constant of $1$. 
All logarithms in this paper are in base $2$, unless otherwise stated.  

\begin{theorem}
\label{thm:path}
\hfill
\begin{enumerate}
\item For every integer $r\geq1$ and every path $P$, 
\[
\wcol_r(P) \leq \left\lceil\log r\right\rceil + 2.
\]
\item For every integer $r\geq1$ and every path $P$ on at least $2r$ vertices, 
\[
\wcol_r(P) \geq \lceil\log (r+1)\rceil+1.
\]
\end{enumerate}
\end{theorem}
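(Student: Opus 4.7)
For Part~1, the plan is to construct, for each $r \geq 1$, an ordering $\sigma$ of $V(P)$ achieving $\wcol_r(P,\sigma) \leq \lceil \log r\rceil + 2$. For $r=1$ the left-to-right ordering gives $\wcol_1(P)=2$, matching the bound. For $r \geq 2$ I would use the recursive-median ordering: put the middle vertex of $P$ first, then recursively order the two halves. Associated with this is a balanced binary tree $T$ whose in-order traversal is $P$ and whose root is the $\sigma$-smallest vertex. The key observation is that $u \in \WReach_r[P,\sigma,v]$ iff $u$ is an ancestor of $v$ in $T$ and $\dist(u,v) \leq r$, since the $u$-to-$v$ subpath in $P$ lies entirely in $u$'s subtree and $u$ is its $\sigma$-minimum. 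As one climbs from $v$ toward the root in $T$, the associated subtree interval at least doubles in length, so the distance from $v$ to each successive ancestor essentially doubles as well, giving at most $\lceil \log r \rceil + 2$ ancestors within distance $r$ of $v$.

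For Part~2 I would fix an arbitrary ordering $\sigma$ and form its Cartesian tree $T$: the root is the $\sigma$-minimum of $P$, and we recurse on the two sub-paths on either side of the root. The same characterization applies: $u \in \WReach_r[P,\sigma,v]$ iff $u$ is an ancestor of $v$ in $T$ and $\dist(u,v)\leq r$. The first step is to pick any subpath $Q$ of $P$ on exactly $r+1$ vertices. Its Cartesian tree $T_Q$ has $r+1$ nodes, and any binary tree on $N$ nodes has a node of depth at least $\lceil \log(N+1)\rceil - 1$, so some $v \in V(Q)$ has at least $\lceil \log(r+2)\rceil$ ancestors in $T_Q$, all at distance at most $r$ from $v$ in $P$. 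This already gives $\wcol_r(P,\sigma) \geq \lceil \log(r+2)\rceil$, settling the cases when $r+1$ is a power of $2$.

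For the remaining values of $r$, the first step falls one short of the target, and the hypothesis $|V(P)| \geq 2r$ must be used to produce one additional weakly $r$-reachable vertex. My plan is to embed $Q$ inside a subpath $Q' \subseteq P$ with $|V(Q')| \leq 2r$, and to consider the $\sigma$-minimum $u^*$ of $Q'$. If $u^* \in V(Q') \setminus V(Q)$, then $u^*$ is an ancestor of $v$ in $T$ distinct from the ancestors already harvested in $T_Q$; choosing $Q'$ to extend $Q$ by only a short distance in the appropriate direction ensures $\dist(u^*,v) \leq r$, giving the missing $+1$.

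The main obstacle is handling the case where $u^*$ lies inside $V(Q)$ (so the $\sigma$-minimum of the larger window $Q'$ already coincides with the root of $T_Q$): in that case a different subpath $Q$ must be chosen, or the first step should be applied to a suitable slice of $Q$ to obtain one more level of ancestors. Ensuring the construction goes through uniformly across all parities of $r$, and that the extra ancestor is always both new and within distance $r$, is the trickiest part of the proof.
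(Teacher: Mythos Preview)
Your approach differs from the paper's. The paper does not use a single recursive-median ordering of the whole path; instead it sets $V_0=\{v_i : i\equiv 0 \pmod r\}$, places all of $V_0$ first, and then orders each component of $P-V_0$ (a path on at most $r-1$ vertices) by an ordering realising $\wcol_\infty = \td \le \lceil\log r\rceil$. Any vertex then weakly $r$-reaches at most two vertices of $V_0$ plus at most $\lceil\log r\rceil$ vertices inside its block, giving the bound immediately.

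Your recursive-median ordering may well achieve the same bound, but the justification you give does not: the claim that ``the distance from $v$ to each successive ancestor essentially doubles'' is false. For instance, for $v=9$ on the path $\{1,\dots,31\}$ the proper ancestors are $10,12,8,16$ at distances $1,3,1,7$---not monotone, let alone doubling. What is true is that the subtree \emph{intervals} double, and that the left-ancestors and right-ancestors of $v$ separately move away from $v$; turning this into the exact bound $\lceil\log r\rceil+2$ (rather than $2\log r + O(1)$) needs a more careful argument than the one you sketched.

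\textbf{Part 2.} Here the gap is more serious, and you have already identified it yourself. The paper avoids your ``main obstacle'' entirely by reversing the order of the two steps. Instead of first fixing an arbitrary subpath $Q$ and then hunting for an extra ancestor outside it, the paper first takes $v_0$ to be the global $\sigma$-minimum of $P$. Since $|V(P)|\ge 2r$, one side of $P-v_0$ has at least $r$ vertices; let $Q$ be the subpath of exactly $r$ vertices on that side with one endpoint adjacent to $v_0$. Because $\wcol_\infty(Q)=\td(Q)=\lceil\log(r+1)\rceil$, some $q\in V(Q)$ has $|\WReach_\infty[Q,\sigma,q]|\ge \lceil\log(r+1)\rceil$, and since $|V(Q)|=r$ this equals $|\WReach_{r-1}[Q,\sigma,q]|$. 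Finally $v_0$ is at distance at most $r$ from $q$ and is $\sigma$-smaller than everything, so it contributes the extra $+1$ for free. Choosing $v_0$ first is precisely what guarantees the extra ancestor is always new and always within distance $r$, eliminating the case analysis you were struggling with.
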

We remark that the upper and lower bounds in Theorem~\ref{thm:path} coincide when $r$ is a power of $2$. 

Our main result concerns graphs of simple treewidth at most $k$. 
Simple treewidth is a variant of treewidth introduced by Knauer and Ueckerdt~\cite{KU} (see also~\cite{LW-thesis}).  
One way of defining the treewidth of a graph $G$ is as follows: This is the smallest nonnegative integer $k$ such that $G$ is a subgraph of a $k$-tree,  where a {\em $k$-tree} is any graph that can be obtained by starting with a $(k+1)$-clique and repeatedly choosing an existing $k$-clique and adding a new vertex adjacent to all vertices of the clique. 
If we add the extra requirement that a $k$-clique is never chosen more than once, then the resulting graphs are {\em simple $k$-trees}; accordingly, the smallest nonnegative integer $k$ such that $G$ is a subgraph of a simple $k$-tree is the {\em simple treewidth $\stw(G)$} of $G$. 
It is easy to see that 
\[\tw(G)\leq \stw(G)\leq \tw(G)+1.\]
For $k=1,2,3$, graphs of simple treewidth at most $k$ coincide with the following graph classes: 
\begin{itemize}
    \item[]\hspace{-0.8cm}$k=1$ \quad disjoint unions of paths;
    \item[]\hspace{-0.8cm}$k=2$ \quad outerplanar graphs;
    \item[]\hspace{-0.8cm}$k=3$ \quad planar graphs of treewidth at most $3$, i.e.\ subgraphs of stacked triangulations.
\end{itemize}
We note that simple $3$-trees are known under the names of stacked triangulations, planar $3$-trees, and Apollonian networks in the literature. 

The following theorem is our main result. 

\begin{theorem}
\label{thm:stw} 
For all integers $k\geq 1$ and $r\geq 1$ we have 
\[
  \wcol_r(G) \leq \Oh(r^{k-1} \log r) 
\]
for every graph $G$ of simple treewidth at most $k$. 
Furthermore, there is such a graph $G$ satisfying 
\[
  \wcol_r(G) \geq \frac{r^{k-1} \ln r}{k!} = \Omega_k(r^{k-1} \log r). 
\]
\end{theorem}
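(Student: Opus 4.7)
I would prove both parts of Theorem~\ref{thm:stw} by induction on $k$, the base case $k=1$ being Theorem~\ref{thm:path} applied to disjoint unions of paths. The structural fact I rely on is that a simple $k$-tree $G$ admits a canonical tree decomposition $(T,\mathcal{B})$ in which every bag has size $k+1$ and adjacent bags intersect in a $k$-clique; since a $k$-clique may serve as a bag-intersection at most once, $T$ has maximum degree at most $k+1$. Root $T$ arbitrarily so that each non-root bag corresponds to a unique new vertex being added.

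For the upper bound, I would build the ordering $\sigma$ by peeling off spines. Take a root-to-leaf path $\pi$ of $T$; along $\pi$ the bags introduce one new vertex at a time, producing a ``spine'' sequence of vertices in $G$ that behaves like a path (each new vertex attaches only to the $k$-clique shared between consecutive bags). Order this spine via the ordering of Theorem~\ref{thm:path}(1), giving weak $r$-reachability $\lceil\log r\rceil+2$ among spine vertices. Then recurse on the subtrees of $T$ hanging off $\pi$, each of which is itself a simple $k$-tree attached to $G$ through a single $k$-clique on the spine. To bound $|\WReach_r[G,\sigma,v]|$, I would cut any short witnessing path into a spine segment (contributing $O(\log r)$ by Theorem~\ref{thm:path}) plus at most $O(r)$ ``exits'' through attachment cliques into earlier-processed subtrees, yielding a recurrence of the form $f_k(r)\leq O(r)\cdot f_{k-1}(r)+O(\log r)$; with $f_1(r)=O(\log r)$ this solves to $f_k(r)=O(r^{k-1}\log r)$.

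For the lower bound, I construct $G_k$ recursively: $G_1$ is a path of length $\geq 2r$, forcing $\Omega(\log r)$ by Theorem~\ref{thm:path}(2); and $G_k$ is obtained by fixing a long spine and attaching many scaled copies of $G_{k-1}$ along $(k-1)$-cliques placed along the spine, arranged so that $\stw(G_k)\leq k$. For any ordering $\sigma$ of $G_k$, a pigeonhole argument locates a copy of $G_{k-1}$ containing a vertex $v$ whose $r$-reachable set within that copy already has $\Omega(r^{k-2}\log r)$ elements by induction; moreover $v$ sees through the spine into $\Omega(r)$ nearby copies placed earlier in $\sigma$, multiplying the count by an extra factor of $r$. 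The constant $1/k!$ then follows from an integration-like estimate over the positions of copies along the spine, combined with the inductive $1/(k-1)!$ factor.

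The main obstacle in both directions is the charging argument that cleanly separates the single ``path-like'' dimension (which contributes the $\log r$ factor via Theorem~\ref{thm:path}) from the $k-1$ ``tree-like'' dimensions (each contributing a factor $r$), so that the two effects multiply to $r^{k-1}\log r$ without double-counting. I expect most of the technical work to lie in setting up the recursion (resp.\ extremal construction) so that weakly reachable vertices are charged exactly once across recursion levels and so that the constants line up with the promised $1/k!$.
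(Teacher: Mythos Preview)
Your upper-bound argument has a real gap. You correctly observe (and this is exactly where simplicity bites) that the ``new'' vertices along a root-to-leaf path of $T$ form a genuine path in $G$, so the spine can indeed be ordered via Theorem~\ref{thm:path}. You also correctly say that the subtrees of $T$ hanging off the spine are simple $k$-trees. But then your recurrence charges them $f_{k-1}(r)$, which is unjustified: nothing in a spine decomposition lowers the simple treewidth of the hanging pieces (peeling a spine off an outerplanar graph leaves outerplanar pieces, for instance), so the recurrence you wrote does not terminate in $k$. The paper obtains the drop from $k$ to $k-1$ by an entirely different decomposition. It first makes $G$ chordal by turning every bag into a clique, then takes a BFS layering $(L_0,L_1,\ldots)$ of $G$ and invokes the key structural lemma that each layer $G[L_i]$ has simple treewidth at most $k-1$ (proved by contracting all earlier layers to a single apex and using minor-monotonicity of simple treewidth). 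The ordering is the concatenation $\sigma_0\sigma_1\cdots$, where $\sigma_i$ is an inductively good ordering of $G[L_i]$. A vertex $v\in L_i$ can weakly $r$-reach only layers $L_{i-r},\ldots,L_i$, and within each such $L_j$ shadow-completeness of BFS layers in chordal graphs funnels all witnessing paths through a single clique, reducing the count to one $\WReach$-set inside $G[L_j]$; this gives the clean recurrence $f(r,k)\leq (r+1)\,f(r,k-1)$. That BFS-layering lemma is the idea you are missing.

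Your lower-bound sketch is in the right spirit but does not match the construction that delivers the constant $1/k!$. In the paper's graph $G_{r,k}$, a vertex at BFS distance $i$ from the root carries many private copies of $G_{i+1,k-1}$ (note the variable radius $i+1$), and for an arbitrary ordering $\sigma$ one walks down a path $v_0,v_1,\ldots,v_r$ through the layers, choosing at each step a private copy of $v_i$ lying entirely to the right of $v_i$ in $\sigma$; induction on $k-1$ then gives $g(i,k-1)$ vertices that are $i$-reachable from each $v_i$, all of which are $r$-reachable from $v_r$. This yields $\sum_{i=0}^r g(i,k-1)$ rather than $r\cdot g(r,k-1)$, and the bound $g(r,k)\geq r^{k-1}\ln r/k!$ then follows by iterated integration of $x^{k-2}\ln x$. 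A single long spine with uniform-size copies of $G_{k-1}$ and a pigeonhole argument does not produce this summation structure.
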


\begin{corollary}\hfill
\begin{enumerate}
\item Outerplanar graphs have $r$-th weak coloring numbers in $\Oh(r\log r)$ and this bound is tight.
\item There is a family of planar graphs with $r$-th weak coloring numbers in $\Omega(r^{2}\log r)$.
\end{enumerate}
\end{corollary}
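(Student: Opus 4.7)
The plan is to derive the corollary directly from Theorem~\ref{thm:stw} together with the identification of simple-treewidth-$k$ graphs with familiar planar classes given in the introduction. Since the main theorem was set up with precisely this corollary in mind, the argument is essentially a one-line specialization, and the only ``work'' is matching the parameter $k$ to the correct class and pointing out that the lower-bound construction lies in the required class.

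For part~(i), I would invoke Theorem~\ref{thm:stw} with $k=2$. The introduction states that the class of graphs with $\stw(G)\leq 2$ coincides with the class of outerplanar graphs, so every outerplanar graph $G$ satisfies $\wcol_r(G)\leq \Oh(r\log r)$. Conversely, the lower-bound part of Theorem~\ref{thm:stw} produces, for each $r$, a graph $G$ of simple treewidth at most $2$ with $\wcol_r(G)\geq \frac{r\ln r}{2}=\Omega(r\log r)$. Since this $G$ has simple treewidth at most $2$, it is outerplanar, establishing tightness.

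For part~(ii), I would apply the lower-bound part of Theorem~\ref{thm:stw} with $k=3$. This yields, for each $r$, a graph $G$ with $\stw(G)\leq 3$ and $\wcol_r(G)\geq \frac{r^2\ln r}{6}=\Omega(r^2\log r)$. By the identification recalled in the introduction, graphs with $\stw(G)\leq 3$ are subgraphs of stacked triangulations, hence in particular planar, so the family $\{G_r\}_{r\geq 1}$ witnesses the claimed $\Omega(r^2\log r)$ lower bound within the planar class.

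Since the corollary is a direct corollary of Theorem~\ref{thm:stw} and the class identifications already listed in the text, there is no real obstacle to overcome; the only thing to verify is that the constructions underlying Theorem~\ref{thm:stw} for $k=2$ and $k=3$ indeed produce graphs within the respective classes (outerplanar, planar), which is automatic from the definition of simple treewidth. Thus the proof reduces to invoking Theorem~\ref{thm:stw} twice and citing the correspondences $\stw\leq 2\Leftrightarrow$ outerplanar and $\stw\leq 3\Rightarrow$ planar.
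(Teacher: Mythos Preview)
Your proposal is correct and matches the paper's approach exactly: the paper states the corollary without proof, treating it as an immediate specialization of Theorem~\ref{thm:stw} at $k=2$ and $k=3$ combined with the identifications $\stw\leq 2\Leftrightarrow$ outerplanar and $\stw\leq 3\Rightarrow$ planar listed in the introduction.
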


An open problem that we find particularly intriguing is to determine the right asymptotics for the maximum $r$-th weak coloring numbers of planar graphs. 
The current best bounds are $\Omega(r^{2}\log r)$ and $\Oh(r^3)$. 
We believe that the lower bound is the right order of magnitude:

\begin{conjecture}
\label{conj:planar}
Planar graphs have $r$-th weak coloring numbers in $\Oh(r^{2}\log r)$.  
\end{conjecture}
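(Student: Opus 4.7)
The plan is to refine the chordal-partition argument of Van den Heuvel, Ossona de Mendez, Quiroz, Rabinovich, and Siebertz~\cite{vdHetal17} by feeding it Theorem~\ref{thm:stw} for simple treewidth~$2$ in place of the treewidth bound of Grohe et al.~\cite{Grohe15}. Their $\Oh(r^{3})$ bound for planar graphs arises from a BFS-based vertex ordering $\sigma$ for which $\WReach_{r}[G,\sigma,v]$ meets only $\Oh(r)$ consecutive BFS layers, together with a $\binom{r+2}{2}=\Oh(r^{2})$ bound per layer obtained from a treewidth-$2$ auxiliary structure. Replacing the per-layer bound by the outerplanar (i.e.\ simple treewidth $2$) bound $\Oh(r\log r)$ from Theorem~\ref{thm:stw} would give exactly $\Oh(r^{2}\log r)$.

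Concretely, I would fix a BFS tree $T$ of a planar graph $G$ and adopt the vertex ordering $\sigma$ of~\cite{vdHetal17}. For a vertex $v$ and each BFS layer $L_{j}$ that can contribute to $\WReach_{r}[G,\sigma,v]$, I would isolate an auxiliary graph $A_{j}$ capturing which vertices of $L_{j}$ can be weakly $r$-reached from $v$. The decisive step is to upgrade the classical observation that $A_{j}$ has treewidth at most $2$ to the stronger statement that $A_{j}$ is outerplanar. Granted this, Theorem~\ref{thm:stw} with $k=2$ would bound $|L_{j}\cap\WReach_{r}[G,\sigma,v]|$ by $\Oh(r\log r)$, and summing over the $\Oh(r)$ relevant layers would yield the desired $\Oh(r^{2}\log r)$ bound on $\wcol_{r}(G)$.

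The hard part is the outerplanarity of $A_{j}$, and this is, as far as I can see, exactly why Conjecture~\ref{conj:planar} is still open. The auxiliary layer graph produced by the chordal-partition technique is obtained by contracting BFS paths and then taking a slice of the resulting minor; this is enough to ensure treewidth $2$, but to get outerplanarity one would additionally have to forbid $K_{2,3}$ minors inside the slice, which does not follow from the construction. A plausible route is to replace the chordal partition by the planar product structure theorem and hope that the ``horizontal'' factor $H$ in a decomposition $G\subseteq H\boxtimes P$ can be taken of simple treewidth~$3$, so that appropriately chosen layer slices are genuinely outerplanar; another is to exploit the planarity of $G$ more directly, by showing that each layer slice embeds into a disk with all its vertices on the boundary. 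Either route requires a structural input that is not currently available in the literature, and this is the main obstacle to turning the above outline into a proof.
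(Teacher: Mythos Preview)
The statement you were asked to prove is Conjecture~\ref{conj:planar}, and the paper does \emph{not} prove it; it is explicitly presented as an open problem (see also Section~\ref{sec:open_problems}). There is therefore no ``paper's own proof'' to compare against. Your proposal is consistent with this: you do not claim a proof, you sketch a plausible strategy and then correctly identify the missing structural ingredient that prevents it from going through. That is an accurate reading of the situation.

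Two technical remarks on the sketch itself. First, your description of the $\Oh(r^{3})$ argument in~\cite{vdHetal17} is slightly off: that proof does not bound ``$\Oh(r)$ BFS layers times $\Oh(r^{2})$ per layer''. It uses a chordal partition into geodesic (vertical) paths whose quotient has treewidth at most~$2$; the $\binom{r+2}{2}$ factor counts the number of such paths that can be reached, and the $2r+1$ factor bounds the vertices reached on each path. The layer-by-layer scheme you describe is closer to the proof of Theorem~\ref{thm:stw} in this paper, which applies to graphs of bounded \emph{simple} treewidth, not to planar graphs in general. Second, your proposed fix via the product structure theorem is addressed in the paper's footnote: a direct application of $G\subseteq H\boxtimes P$ with $\tw(H)\leq 8$ yields only $\Oh(r^{3}\log r)$, which is worse than the existing $\Oh(r^{3})$. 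Getting $\Oh(r^{2}\log r)$ out of product structure would require substantially more, along the lines you indicate.

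In short: there is nothing to grade here against the paper, because the paper offers no proof; your write-up is an honest and reasonably well-informed explanation of why the conjecture remains open.
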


The paper is organized as follows. 
In Section~\ref{sec:paths} we prove \Cref{thm:path} about paths. 
Then in Section~\ref{sec:stw} we prove \Cref{thm:stw} about graphs with bounded simple treewidth. 
Finally in Section~\ref{sec:open_problems} we discuss a number of open problems about weak coloring numbers. 

\section{Paths}
\label{sec:paths}

A \emph{rooted forest} is a disjoint union of rooted trees.
The \emph{height} of a rooted forest $F$ is the maximum number of vertices on a path from a root to a leaf in $F$. 
For two vertices $u$, $v$ in a rooted forest $F$, we say that $u$ is an \emph{ancestor} of $v$ in $F$
if $u$ lies on a path from a root to $v$ in $F$.
The \emph{closure} of $F$, denoted by $\clos(F)$, is the graph with vertex set $V(F)$ and edge set $\set{\set{v,w}\mid v \text{ is an ancestor of $w$ in $F$}}$.
The \emph{treedepth} of a graph $G$, denoted by $\td(G)$, is the minimum height of a rooted forest $F$ such that
$G\subseteq \clos(F)$.
The following is a folklore observation, see e.g.~\cite[Lemma~6.5]{sparsity}.
\begin{lemma}
\label{lem:td}
For every graph $G$ we have the equality
\[
\wcol_{\infty}(G) = \td(G).
\]
\end{lemma}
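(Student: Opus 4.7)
The plan is to prove the two inequalities $\wcol_{\infty}(G) \leq \td(G)$ and $\td(G) \leq \wcol_{\infty}(G)$ separately, each via a direct conversion between a rooted forest and a vertex ordering.

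For the easier direction $\wcol_{\infty}(G) \leq \td(G)$: I take an optimal treedepth decomposition $F$, that is, a rooted forest with $G \subseteq \clos(F)$ and $\height(F) = \td(G)$, and pick any ordering $\sigma$ of $V(G)$ that places each vertex after all of its $F$-ancestors. The key claim is that for every vertex $v$, the set $\WReach_{\infty}[G, \sigma, v]$ is contained in the set of $F$-ancestors of $v$ (including $v$ itself), which immediately yields $|\WReach_{\infty}[G, \sigma, v]| \leq \height(F) = \td(G)$. To justify the claim, consider a witnessing $u$-$v$ path $P$ with all vertices $\geq_\sigma u$: every edge of $P$ is an edge of $\clos(F)$ and hence relates a pair of $F$-comparable vertices, so a short induction along $P$ shows that the $\sigma$-minimum vertex of $P$ is an $F$-ancestor of every other vertex on $P$. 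Since $u$ is that $\sigma$-minimum, $u$ is an $F$-ancestor of $v$.

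For the converse $\td(G) \leq \wcol_{\infty}(G)$: I fix an ordering $\sigma$ with $\wcol_{\infty}(G, \sigma) = \wcol_{\infty}(G)$ and build $F$ as follows. For each connected component $C$ of $G$, declare the $\sigma$-smallest vertex $c^*$ of $C$ to be the root of a tree of $F$; for every other vertex $v$, set $\parent(v)$ to be the $\sigma$-largest vertex of $\WReach_{\infty}[G, \sigma, v] \setminus \{v\}$. This parent is well defined because the component root $c^*$ itself lies in $\WReach_{\infty}[G, \sigma, v]$: any $c^*$-$v$ path inside $C$ witnesses this, since every vertex of $C$ is $\geq_\sigma c^*$.

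The heart of the argument is the following closure property, which follows by concatenating two witnessing paths and extracting an induced subpath:
\[
y, z \in \WReach_{\infty}[G, \sigma, v] \text{ and } y \leq_\sigma z \implies y \in \WReach_{\infty}[G, \sigma, z].
\]
Iterating this along the chain of $F$-parents of $v$ shows that every $F$-ancestor of $v$ lies in $\WReach_{\infty}[G, \sigma, v]$, giving $\height(F) \leq \wcol_{\infty}(G, \sigma)$. To see $G \subseteq \clos(F)$, consider an edge $vw \in E(G)$ with $v <_\sigma w$ and trace the chain of $F$-parents $w = w_0, w_1, w_2, \ldots$ upwards. The closure property gives inductively that as long as $v \neq w_i$ we have $v \in \WReach_{\infty}[G, \sigma, w_i]$ and hence $w_{i+1} \geq_\sigma v$. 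Since the chain strictly decreases in $\sigma$ and terminates at $c^* \leq_\sigma v$, the equality $v = w_i$ must be reached at some step, proving that $v$ is an $F$-ancestor of $w$. I expect this final chain-chasing step to be the main technical obstacle, but once the closure property is in place it goes through mechanically.
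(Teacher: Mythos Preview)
The paper does not actually prove this lemma; it is stated as a folklore observation with a pointer to \cite[Lemma~6.5]{sparsity}. So there is no in-paper argument to compare against, and your write-up would in fact supply what the paper omits. Your two directions are the standard ones and are correct.

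One small imprecision is worth fixing. For the bound $\height(F)\le\wcol_\infty(G,\sigma)$ you need that every $F$-ancestor of $v$ lies in $\WReach_\infty[G,\sigma,v]$. The displayed ``closure property''
\[
y,z\in\WReach_\infty[G,\sigma,v],\ y\le_\sigma z \ \Longrightarrow\ y\in\WReach_\infty[G,\sigma,z]
\]
does not give this by iteration; what it gives (by peeling off $\parent(v)$) is the reverse inclusion $\WReach_\infty[G,\sigma,v]\subseteq\{\,\text{$F$-ancestors of $v$}\,\}$. The statement you actually use for the height bound is \emph{transitivity} of weak $\infty$-reachability: if $a\in\WReach_\infty[G,\sigma,b]$ and $b\in\WReach_\infty[G,\sigma,c]$ then $a\in\WReach_\infty[G,\sigma,c]$. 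This follows from the same path-concatenation idea you already invoke, so the fix is purely cosmetic: either state transitivity explicitly, or note that the two inclusions together yield $\WReach_\infty[G,\sigma,v]=\{\,\text{$F$-ancestors of $v$}\,\}$, from which the height bound is immediate. Your argument for $G\subseteq\clos(F)$ via the parent chain and the closure property is clean and goes through as written.
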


The treedepth of the path $P_n$ on $n$ vertices is $\left\lceil\log(n+1)\right\rceil$. 
See~Figure~\ref{fig:path-td}.
This formula follows from a simple recursion: $\td(P_0)=0$, $\td(P_1)=1$ and 
$\td(P_n) = 1+ \min_{1\leq i \leq n}\max\set{\td(P_{i-1}),\td(P_{n-i})} = 1 + \td\left(P_{\lceil\frac{n-1}{2}\rceil}\right)$.
\begin{figure}[!h]
    \centering
    \includegraphics{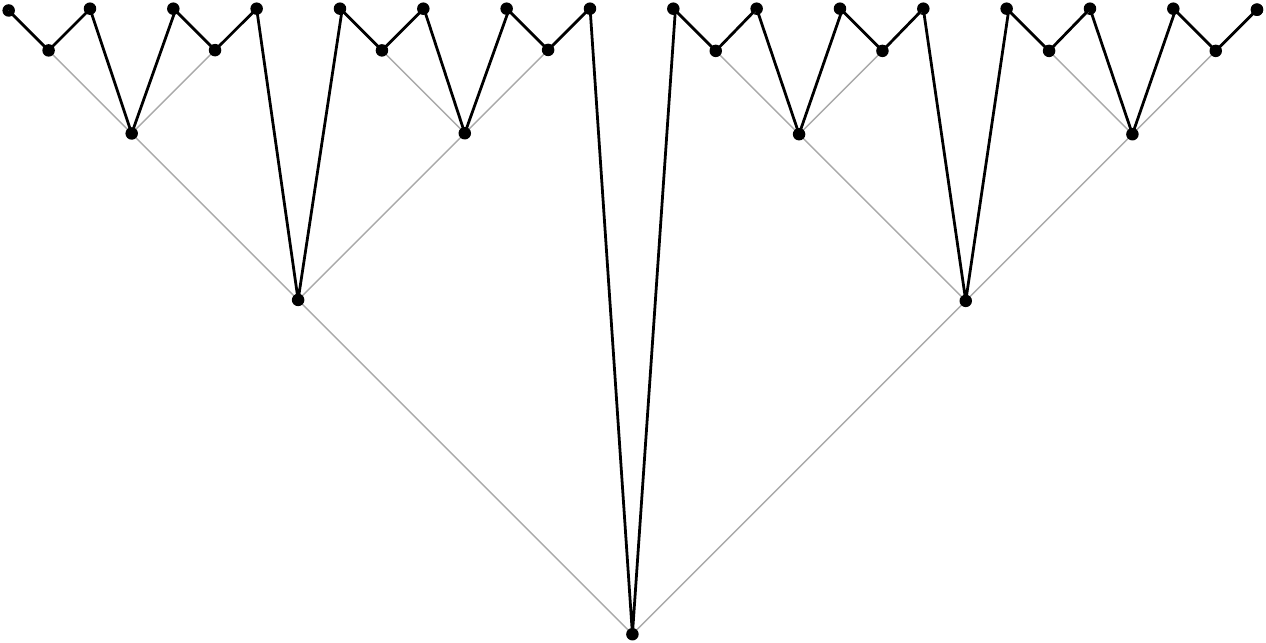}
    \caption{The treedepth of a path is logarithmic in the path length.}
    \label{fig:path-td}
\end{figure}

\begin{proof}[Proof of Theorem~\ref{thm:path}]
First we prove the upper bound.
Let $r\geq1$ and let $P$ be a path on $n$ vertices. 
Enumerate the vertices of $P$ from one end to the other:
$v_1,\ldots,v_n$.
If $r=1$, then we just define an ordering $\sigma$ to follow the enumeration and
clearly $|\WReach_{1}[P,\sigma,v]|\leq 2$, for every vertex $v$ of $P$.

Thus, suppose that $r\geq2$.
Let $V_0=\set{v_i\in V(P)\mid i = 0 \pmod r}$. 
Note that the components of $P-V_0$ are paths on at most $r-1$ vertices.
Let $\sigma$ be an ordering of $V(P)$ so that 
\begin{enumerate}
\item for every $v\in V_0$ and $w\in V(P)- V_0$, we have $v\leq_{\sigma} w$;
\item
\label{item:wcol-infty} 
for each component $P'$ of $P-V_0$, the restriction $\restr{\sigma}{V(P')}$ is witnessing that $\wcol_{\infty}(P') \leq \left\lceil\log r\right\rceil$.
\end{enumerate}

(We can do this by Lemma~\ref{lem:td}.)  
We claim that for every vertex $v$ in $P$ we have $|\WReach_{r}[P,\sigma,v]| \leq \left\lceil\log r\right\rceil + 2$.

Consider first $v\in V_0$. 
By the construction of $\sigma$ the only vertices $v$ can weakly $\infty$-reach are in $V_0$. 
But there are at most two vertices from $V_0$ distinct from $v$ in distance at most $r$ from $v$ in $P$. 
Therefore, $|\WReach_{r}[P,\sigma,v]|\leq 3 \leq \left\lceil\log r\right\rceil + 2$.

Consider now $v\in V(P)- V_0$ and let $P'$ be the component of $P-V_0$ containing $v$.
Again by the construction of $\sigma$ the only vertices $v$ can weakly $\infty$-reach are in $P'$ and $V_0$.
By~\ref{item:wcol-infty}, $|\WReach_{\infty}[P,\sigma,v]\cap V(P')|\leq \left\lceil\log r\right\rceil$. 
And again there are at most two vertices from $V_0$ in distance at most $r$ from $v$ in $P$. 
Thus indeed $|\WReach_{r}[P,\sigma,v]|\leq \left\lceil\log r\right\rceil + 2$, as desired.

Now we switch to the proof of the lower bound. 
Consider a path $P$ on at least $2r$ vertices and a linear order $\sigma$ on $V(P)$. 
We are going to show that $\wcol_{r}(P,\sigma) \geq \lceil\log (r+1)\rceil+1$.
Let $v_0$ be the $\sigma$-minimum vertex of $P$. 
Since $P$ has at least $2r$ vertices, one of the components of $P-v_0$ 
contains at least $r$ vertices.
Fix a subpath $Q$ of $P$ in such a component on exactly $r$ vertices with one endpoint of $Q$ being a neighbor of $v_0$ in $P$.
Since $\wcol_{\infty}(Q) = \lceil\log (r+1)\rceil$, there must be a vertex $q$ in $Q$ such that 
$|\WReach_{\infty}[Q,\sigma,q]|\geq \lceil\log (r+1)\rceil$. 
Note also that $\WReach_{\infty}[Q,\sigma,q] = \WReach_{r-1}[Q,\sigma,q]$ since $Q$ is a path on $r$ vertices. 
Furthermore, $v_0$ is at distance at most $r$ to $q$ and $v_0$ is the $\sigma$-minimum vertex so 
$v_0$ is $r$-weakly reachable from $q$ in $P$. Therefore,
\[
|\WReach_{r}[P,\sigma,q]| \geq |\WReach_{\infty}[Q,\sigma,q]| + 1 \geq \lceil\log (r+1)\rceil +1.
\]
\end{proof}

\section{Simple treewidth}
\label{sec:stw}

We begin this section by giving definitions of treewidth and simple treewidth that are slightly different but equivalent to the ones given in the introduction. 
These definitions will be more convenient for our purposes.  
Let $G$ be a graph. 
A \emph{tree-decomposition} of $G$ is a pair $(T,\calB)$ where $T$ is a tree and $\calB=(B_t)_{t\in V(T)}$ is a family of subsets of $V(G)$, satisfying
\begin{enumerate}
\item \label{prop-1-td} for each $v\in V(G)$ the set $\set{t\in V(T)\mid v\in B_t}$ induces a non-empty subtree of $T$;
\item for each $uv\in E(G)$ there exists $t\in V(T)$ with $u,v\in B_t$.
\end{enumerate}
We usually call the sets $B_t$, the \emph{bags} of $(T,\calB)$.
The \emph{width} of a tree-decomposition $(T,\calB)$ is $\max_{t\in V(T)} |B_t|-1$.
The \emph{treewidth} of $G$, denoted by $\tw(G)$, is the least integer $k$ such that $G$ has a tree-decomposition of width at most $k$.

For $k\geq1$, a tree-decomposition $(T,\calB)$ of $G$ is $k$-\emph{simple} if $(T,\calB)$ is of width at most $k$ and for every $X\subset V(G)$ with $|X|=k$, we have that $X\subset B_t$ for at most two distinct $t\in V(T)$. 
The \emph{simple treewidth} of $G$, denoted by $\stw(G)$, 
is the least integer $k$ such that $G$ has a $k$-simple tree-decomposition. 

The following lemma is proved in~\cite{LW-thesis}, we give a proof to keep the paper self contained. 

\begin{lemma}
\label{lem:minor_stw}
Let $G$ be a graph and $H$ be a minor of $G$. Then $\stw(H)\leq \stw(G)$.
\end{lemma}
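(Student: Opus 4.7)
The plan is to reduce to the case of a single edge contraction, since the other minor operations are easy: deleting an edge leaves the tree-decomposition unchanged, and deleting a vertex $x$ from $G$ and from every bag can only shrink the set of bags containing any fixed $k$-subset (which necessarily avoids $x$). So suppose $H = G/uv$, denote the contracted vertex by $w$, and let $(T, \calB)$ be a $k$-simple tree-decomposition of $G$. Form $(T, \calB')$ by setting $B'_t = (B_t \setminus \{u,v\}) \cup \{w\}$ whenever $B_t \cap \{u,v\} \neq \emptyset$, and $B'_t = B_t$ otherwise. Standard checks show that $(T, \calB')$ is a tree-decomposition of $H$ of width at most $k$: in particular, the subtree for $w$ equals $T_u \cup T_v$, which is connected because $uv \in E(G)$ forces $T_u \cap T_v \neq \emptyset$.

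The delicate point is verifying $k$-simplicity of $(T, \calB')$. Take a $k$-subset $Y \subseteq V(H)$. If $w \notin Y$, then the bags of $\calB'$ containing $Y$ coincide with those of $\calB$ containing $Y$, and there are at most two. If $w \in Y$, write $Y = Y' \cup \{w\}$ with $|Y'| = k-1$ and observe that $Y \subseteq B'_t$ iff $Y' \cup \{u\} \subseteq B_t$ or $Y' \cup \{v\} \subseteq B_t$. Letting $A := \{t : Y' \cup \{u\} \subseteq B_t\}$ and $B := \{t : Y' \cup \{v\} \subseteq B_t\}$, each of $A, B$ has at most two elements by the $k$-simplicity of $\calB$. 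I would then argue that $|A \cup B| \leq 3$: the case $|A \cup B| = 4$ would force $A$ and $B$ to be vertex-disjoint edges of $T$ joined by a further tree-edge, yielding a subpath $t_1 t_2 t_3 t_4$ with $u \notin B_{t_3} \cup B_{t_4}$ and $v \notin B_{t_1} \cup B_{t_2}$; a short tree-median argument then shows $T_u \cap T_v = \emptyset$, contradicting $uv \in E(G)$.

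In the remaining case $|A \cup B| = 3$, say $A = \{t_1, t_2\}$ and $B = \{t_2, t_3\}$, the bag $B_{t_2}$ contains $Y' \cup \{u,v\}$, which has exactly $k+1$ elements, so the width bound forces $B_{t_2} = Y' \cup \{u,v\}$ and hence $B'_{t_2} = Y' \cup \{w\} = Y$. Since $v \notin B_{t_1}$ (otherwise $t_1 \in B$), we also get $B'_{t_1} \supseteq Y' \cup \{w\} = B'_{t_2}$, and symmetrically $B'_{t_3} \supseteq B'_{t_2}$. I would then normalize $(T, \calB')$ by iteratively contracting any tree-edge $tt'$ with $B'_t \subseteq B'_{t'}$: this standard operation preserves tree-decomposition validity, does not increase width, and can only decrease the number of bags containing any fixed $k$-subset. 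By the structural observation just made, every over-counted $k$-subset admits such a contractible edge at its middle bag, so the process terminates with a $k$-simple tree-decomposition of $H$, proving $\stw(H) \leq k$. The main delicate step is excluding $|A \cup B| = 4$ via the subtree property together with $uv \in E(G)$; once that structural picture is in hand, everything else is routine bookkeeping.
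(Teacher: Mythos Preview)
Your approach is essentially the paper's: replace $u,v$ by $w$ in every bag, then contract redundant tree-edges. The problem is your argument ruling out $|A\cup B|=4$. You assert that in this case $A$ and $B$ are ``joined by a further tree-edge, yielding a subpath $t_1t_2t_3t_4$''; this is false. Each of $A$ and $B$ is an edge of $T$, but there is no reason these edges must be adjacent---the subtree $T_{Y'}$ of bags containing $Y'$ can be large, with $A$ and $B$ far apart inside it. Moreover, even granting $u\notin B_{t_3}\cup B_{t_4}$ and $v\notin B_{t_1}\cup B_{t_2}$, you cannot conclude $T_u\cap T_v=\emptyset$: the subtrees $T_u$ and $T_v$ may meet at nodes outside $T_{Y'}$, so your ``short tree-median argument'' as stated does not go through.

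The correct step (and what the paper does) is to show directly that $A\cap B\neq\emptyset$ whenever both are non-empty. Pick any $t_u\in A$ and $t_v\in B$. Since $uv\in E(G)$, the subtrees $T_u$ and $T_v$ intersect, and the median of $t_u$, $t_v$, and any node of $T_u\cap T_v$ is a node $t$ on the $t_u$--$t_v$ path with $u,v\in B_t$. Every bag on that path contains $Y'$, so $B_t\supseteq Y'\cup\{u,v\}$; the width bound then forces $B_t=Y'\cup\{u,v\}$, whence $t\in A\cap B$. This single observation both gives $|A\cup B|\le 3$ and justifies the specific configuration $A=\{t_1,t_2\}$, $B=\{t_2,t_3\}$ you assume in the $|A\cup B|=3$ case (which you had not actually established, since a priori one could have $|A|=1$, $|B|=2$ with $A\cap B=\emptyset$). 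With this fix, the remainder of your argument---identifying $B'_{t_2}=Y$ as contained in a neighbouring bag and hence eliminated by the contraction process---is exactly the paper's.
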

\begin{proof}
Let $(T,\calB)$ be a $k$-simple tree-decomposition of $G$. 
It is enough to prove the lemma in the case where $H$ is obtained from $G$ by using one of the following three operations once: vertex deletion, edge deletion, or edge contraction. 
For the edge-deletion, it is clear that $(T,\calB)$ is still a $k$-simple tree-decomposition of $H$.
For the vertex deletion, when we remove $v$ from $G$, it is enough to remove $v$  from all the bags in $\calB$. 
The resulting $(T,\calB)$ is a $k$-simple tree-decomposition of $H$.

For the third operation, say $H$ is obtained from $G$ by contracting the edge $uv$ into a vertex $w$. 
Let $(T,\calB')$ be obtained from $(T,\calB)$ by replacing each occurrence of $u$ and $v$ with $w$ in all the bags. 
Note that $(T,\calB')$ is a tree-decomposition of $H$ of width at most $k$, though it is not necessarily $k$-simple. 
Next, apply the following reduction operation on $(T,\calB')$ as long as it is possible: 
If there is an edge $st$ of $T$ such that $B'_s \subseteq B'_t$, contract the edge $st$ of $T$ into $t$ (whose bag is still $B'_t$). 
Observe that this preserves the fact that $(T,\calB')$ is a tree-decomposition of $H$ of width at most $k$. 
Let $(T^*,\calB^*)$ denote the resulting tree-decomposition of $H$ when the process stops. 
We claim that $(T^*,\calB^*)$ is $k$-simple. 
Indeed, suppose $X$ is a subset of $k$ vertices of $H$. 
If $w \notin X$, then clearly $X$ appears in at most as many bags of $(T^*,\calB^*)$ as of $(T,\calB)$. 
If $w \in X$, then consider $(T,\calB')$ (before any reduction operation is applied). 
The two sets $X_u:=(X-\{w\})\cup\{u\}$ and $X_v:=(X-\{w\})\cup\{v\}$ each appears in at most two bags of $(T,\calB)$. 
If one of $X_u, X_v$ appears in no such bags, then clearly $X$ appears in at most two bags of $(T,\calB')$, and thus of $(T^*,\calB^*)$, as well. 
Thus assume $X_u\subseteq B_{t_u}$ and $X_v\subseteq B_{t_v}$ for some nodes $t_u$, $t_v$ in $T$.
Since $(T,\calB)$ is a tree-decomposition we know that $X$ is in all the bags $B_t$ for $t$ in the path $t_uTt_v$.
Since $uv\in E(G)$ we know that there exists $t$ in the path $t_uTt_v$ such that $\set{u,v}\subseteq B_t$. 
We fix such $t$ and we have $B_t\supseteq(X-\set{w})\cup\set{u,v}$ and 
since $|(X-\set{w})\cup\set{u,v}|=k+1$ we must have $B_t=(X-\set{w})\cup\set{u,v}$. 
In particular, $B'_{t}=X$. 
Since $X_u$ appears in at most two bags and $X_v$ appears in at most two bags of $(T,\calB)$ 
but each of them appears in $B_t$, we conclude that $X$ appears in at most three bags of $(T,\calB')$.
We claim that if $X$ appears in any other bag of $(T,\calB')$ than $B'_t$, then the node $t$ will be contracted in the process.
Indeed, if $X$ appears in any bag of $(T,\calB')$ other than $B'_t$, 
then by property~\ref{prop-1-td} of a tree-decomposition it must appear in a neighboring bag $B'_{t'}$ where $tt'\in E(T)$.
In this case we have $B'_t = X \subseteq B'_{t'}$ so this edge of $T$ would be contracted in the reduction.
This proves that $(T^*,\calB^*)$ has at most two bags containing $X$, as desired.
Thus, $(T^*,\calB^*)$ is $k$-simple.
\end{proof}

The \emph{distance} $\dist_G(u,v)$ between two vertices $u$ and $v$ in a graph $G$ is the length of a shortest path connecting $u$ and $v$ (if there is no path between $u$ and $v$ in $G$ then $\dist_G(u,v)=\infty$). 
If $r$ is a vertex in a connected graph $G$ and $L_i=\set{v\in V(G)\mid \dist_G(r,v)=i}$ for all integers $i\geq0$, 
then $(L_0,L_1,\ldots)$ is called a \emph{BFS-layering} of $G$. 

The following lemma follows quickly from Lemma~\ref{lem:minor_stw}. 
It is also used in~\cite[Lemma 14]{BDJM20}, where it is stated without proof. 
We include a proof for completeness.

\begin{lemma}
\label{lem:stw_layering}
Let $G$ be a connected graph with $\stw(G)=k \geq 1$ and let $(L_0,L_1,\ldots)$ be a BFS-layering of $G$. 
Then $\stw(G[L_i]) \leq k-1$ for every $i\geq0$.
\end{lemma}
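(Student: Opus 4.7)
The plan is to combine Lemma~\ref{lem:minor_stw} with a ``universal vertex removal'' trick on tree-decompositions. The key intermediate graph is $H := G[L_i] + v^*$, the graph obtained from $G[L_i]$ by adding a single new vertex $v^*$ adjacent to every vertex of $L_i$. First I would show that $H$ is a minor of $G$ and hence $\stw(H) \leq k$, and then I would argue that the universal vertex $v^*$ forces $\stw(H) \geq \stw(G[L_i]) + 1$; chaining these gives $\stw(G[L_i]) \leq k-1$.

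For $i = 0$ the statement is trivial since $|L_0| = 1$ and $\stw$ of a single vertex is $0 \leq k-1$. For $i \geq 1$, I would construct $H$ as a minor of $G$ by first deleting all vertices of $L_{i+1} \cup L_{i+2} \cup \cdots$ and then contracting the subgraph induced on $L_0 \cup \cdots \cup L_{i-1}$ to a single vertex $v^*$. This induced subgraph is connected because every vertex $v \in L_j$ with $j \leq i-1$ has a shortest path to the root that visits exactly one vertex of each layer $L_0, L_1, \ldots, L_j$, so the whole subgraph is connected via the root. Each vertex of $L_i$ has at least one neighbor in $L_{i-1}$ (again by the BFS property), so after the contraction $v^*$ is adjacent to every vertex of $L_i$. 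Hence $H$ is indeed a minor of $G$, and Lemma~\ref{lem:minor_stw} yields $\stw(H) \leq \stw(G) = k$.

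Next, starting from any $k$-simple tree-decomposition $(T, \calB)$ of $H$, I would restrict to the subtree $T^* := \{t \in V(T) : v^* \in B_t\}$ (which is a subtree of $T$ by property~\ref{prop-1-td}) and define $\calB^* := (B_t \setminus \{v^*\})_{t \in V(T^*)}$. The goal is to check that $(T^*, \calB^*)$ is a $(k-1)$-simple tree-decomposition of $G[L_i]$. The subtree condition in $T^*$ holds because each $u \in L_i$ shares a bag with $v^*$ (since $u v^*$ is an edge of $H$), and the intersection of two subtrees of $T$ is again a subtree. The width drops to at most $k-1$ since $v^*$ is removed from every surviving bag. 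Finally, $(k-1)$-simplicity follows because any $(k-1)$-subset $X$ of $L_i$ lies in a bag of $\calB^*$ iff $X \cup \{v^*\}$ lies in the corresponding bag of $\calB$, and the latter is a $k$-subset that sits in at most two bags of $\calB$ by $k$-simplicity.

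The main obstacle will be the edge-coverage condition for $(T^*, \calB^*)$: a priori an edge $uv$ of $G[L_i]$ might only be witnessed by a bag $B_t$ with $t \notin T^*$, in which case $\calB^*$ would fail to cover it. The way around this is the standard clique-in-a-bag observation (a consequence of the Helly property for subtrees of a tree): the triangle $\{u, v, v^*\}$ is a clique of $H$ and must therefore be contained in some common bag, which by construction lies in $T^*$. Once this point is settled, the remaining verifications are routine.
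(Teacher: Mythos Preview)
Your proof is correct and follows essentially the same approach as the paper: contract the first $i$ layers to a universal vertex, delete the later layers, apply Lemma~\ref{lem:minor_stw}, and then strip the universal vertex from the bags of a $k$-simple tree-decomposition of the resulting minor. The only technical difference is that the paper takes a \emph{minimal} tree-decomposition and argues the universal vertex lies in every bag (so no restriction to $T^*$ is needed and edge-coverage is immediate), whereas you restrict to the subtree $T^*$ and recover edge-coverage via the Helly property; both are standard and equally valid.
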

\begin{proof}
Fix an integer $i\geq0$. 
If $i=0$, then $G[L_i]$ is a one-vertex graph and $\stw(G[L_i])=0\leq k-1$. 
If $L_i = \emptyset$, then the claim trivially holds. 
Thus, assume that $i>0$ and $L_i \neq \emptyset$. 
Consider the set $L=L_0\cup\cdots\cup L_{i-1}$. 
Clearly, $L$ is non-empty and $G[L]$ is connected. 
Let $H$ be a graph obtained from $G$ by contracting all the vertices in $L$ to a single vertex and removing all the vertices in $\bigcup_{j>i} L_j$.
Since $H$ is a minor of $G$ we have $\stw(H) \leq \stw(G) \leq k$.
The single vertex of $H$ that results from the contraction of $L$ we denote by $r$.

Let $(T,\calB)$ be a $k$-simple tree decomposition of $H$ 
with $T$ being minimal possible under taking subgraphs. 
We claim that $r\in B_t$ for every $t\in V(T)$ and $B_t\in\calB$.
Consider a leaf (vertex of degree one) $t$ of $T$ and let $t'$ be the only neighbor of $t$ in $T$.
If $r \not\in B_t$, then $B_t \subseteq B_{t'}$ as all the vertices $v\in V(H)- \set{r}$ are adjacent to $r$, so no vertex $v$ can lie only in $B_t$ and nowhere else. 
But when $B_t \subseteq B_{t'}$ then we can remove $t$ from $T$ and we would still have a tree-decomposition of $H$ contradicting the choice of $(T,\calB)$.
Therefore, $r$ lies in all the leaf bags of $(T,\calB)$ so it must be in all the bags.

Let $B'_t=B_t-\set{r}$ for each $t$ in $T$ and $\calB'=\set{B'_t}_{t\in V(T)}$.
We obtain a tree-decomposition $(T,\calB')$ of $G[L_i]$ and we claim that it is $(k-1)$-simple.
For each $t$ in $T$ we have $|B'_t| = |B_t| -1 \leq k-1$, so $(T,\calB')$ is of width at most $k-1$.
Consider any set $X \subset L_i$ with $|X|=k-1$. 
We claim that $X\subseteq B'_t$ for at most two nodes $t$ in $T$.
Indeed, for each such node $t$ we have 
$X\cup\set{r} \subseteq B_t$ but since $(T,\calB)$ is $k$-simple there are at most two such nodes.
This proves that $(T,\calB')$ is $(k-1)$-simple.
\end{proof}

We also need the following easy lemma about ``shadow completeness'' in BFS layerings of chordal graphs. 
(A graph is {\em chordal} if it has no induced cycle of length at least $4$.) 

\begin{lemma}[see e.g.~\cite{KP08}]
\label{lem:shadow_complete}
Let $G$ be a connected chordal graph and let $(L_0,L_1,\ldots)$ be a BFS-layering of $G$. 
Suppose $H$ is a connected component of $G[\bigcup_{j \geq i} L_j]$ for some $i \geq 1$. 
Then the set of neighbors of $V(H)$ in $L_{i-1}$, which we call the {\em shadow} of $H$, is a clique in $G$. 
\end{lemma}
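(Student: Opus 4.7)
The plan is to prove this by contradiction, exhibiting a chordless cycle in $G$ if some pair $u,v$ in the shadow is non-adjacent. For $i=1$ the shadow lies in $L_0$, which is a single vertex (the BFS root), so the claim is trivial. For $i \ge 2$, fix distinct $u,v$ in the shadow and suppose for contradiction that $uv \notin E(G)$.

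I would construct two internally disjoint $u$-$v$ paths whose union is a cycle. The first, $P$, is a shortest $u$-$v$ path in the subgraph $G[V(H) \cup \{u,v\}]$; since $u$ and $v$ are both adjacent to $V(H)$ and $H$ is connected, such a $P$ exists, has length at least $2$, and its internal vertices all lie in $V(H) \subseteq \bigcup_{j \ge i} L_j$. The second, $Q$, is a shortest $u$-$v$ path in $G[\bigcup_{j \le i-1}L_j]$ subject to the additional constraint that its internal vertices all lie in $\bigcup_{j \le i-2}L_j$; such a $Q$ exists because one can walk up the BFS tree from $u$ to the lowest common ancestor of $u$ and $v$ (which sits in $\bigcup_{j \le i-2}L_j$ since $u \ne v$ and both are in $L_{i-1}$) and back down to $v$, and $Q$ has length at least $2$. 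Because the interiors of $P$ and $Q$ lie in disjoint BFS-layer ranges, $P$ and $Q$ share only $u$ and $v$, so $P \cup Q$ is a cycle $C$ of length at least $4$.

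By chordality of $G$, the cycle $C$ has a chord $e$. I would rule out every possibility other than $e = uv$. A chord whose two endpoints both lie on $P$ would give a shorter $u$-$v$ path in $G[V(H) \cup \{u,v\}]$, contradicting the choice of $P$ (this covers both chords internal to $P$ and chords from $u$ or $v$ to an interior vertex of $P$). A chord whose endpoints both lie on $Q$ would similarly give a shorter $u$-$v$ path in $G[\bigcup_{j \le i-1}L_j]$ whose interior still lies in $\bigcup_{j \le i-2}L_j$ (shortcutting only removes internal vertices), contradicting the choice of $Q$. A chord with one endpoint in the interior of $P$ (hence in $\bigcup_{j \ge i}L_j$) and the other in the interior of $Q$ (hence in $\bigcup_{j \le i-2}L_j$) is impossible because adjacent vertices of $G$ lie in BFS layers differing by at most $1$. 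The only remaining chord is $uv$, contradicting the assumption $uv \notin E(G)$.

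The main subtlety, and the step I would spend the most care on, is the choice of $Q$: imposing that its interior lies in $\bigcup_{j \le i-2}L_j$ rather than merely in $\bigcup_{j \le i-1}L_j$ creates a two-layer gap between the interiors of $P$ and $Q$ that kills all cross-chords via the BFS adjacency constraint. This turns what would otherwise be a delicate induction on distance in $L_{\le i-1}$, with nontrivial stuck cases at distance $2$, into the direct one-shot chord enumeration above.
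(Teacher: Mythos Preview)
The paper does not actually prove this lemma; it is stated with a citation to~\cite{KP08} and no proof is given. So there is nothing to compare your approach against on the paper's side.

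Your argument is correct. The key design choice---forcing the interior of $Q$ into $\bigcup_{j\le i-2}L_j$ via a BFS-tree path to the lowest common ancestor---creates a two-layer gap with the interior of $P$, so the BFS adjacency constraint cleanly kills all cross-chords. The remaining chord cases (both endpoints on $P$, both on $Q$) are handled by the shortest-path minimality of $P$ and $Q$ exactly as you describe; note that a chord of $C$ with both endpoints on $P$ is automatically between non-adjacent vertices of $P$ (else it would be an edge of $C$), so the shortcut is strictly shorter, and the shortcut path stays inside the same induced subgraph since its vertex set only shrinks. The same reasoning applies to $Q$, and the interior constraint on $Q$ is preserved because shortcutting only deletes internal vertices. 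This is a clean, self-contained proof of the folklore fact.
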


We may now turn to the proof of Theorem~\ref{thm:stw}.

\begin{proof}[Proof of Theorem~\ref{thm:stw}]
For convenience, let $f(r,k):=(r+1)^{k-1}(\left\lceil\log r\right\rceil + 2)$, 
for all integers $r\geq1$, $k\geq1$.
We are going to prove by induction on $k$ that 
$\wcol_r(G) \leq f(r,k)$.
For the base case, recall that graphs of simple treewidth at most $1$ are disjoint unions of paths, thus by Theorem~\ref{thm:path} we have $\wcol_r(G) \leq \left\lceil\log r\right\rceil + 2 = f(r,1)$ for all such graphs $G$ and every integer $r\geq1$.

For the induction step, let $k\geq2$, 
let $G$ be a graph with simple treewidth at most $k$. 
%We may assume that $G$ is connected. 
Consider a $k$-simple tree decomposition of $G$.
%let $(T,\calB)$ be a tree decomposition of $G$ which is $k$-simple. 
Note that adding edges to $G$ does not decrease its weak coloring numbers. 
Therefore, we may assume that each bag of the tree decomposition induces a clique in $G$. 
Thus, $G$ is chordal and connected.

Let $(L_0,L_1,\ldots)$ be a BFS-layering of $G$. 
Let $q\geq 0$ be maximum such that $L_q \neq \emptyset$. 
By Lemma~\ref{lem:stw_layering} we have $\stw(G[L_i])\leq k-1$ so by induction, 
we fix an ordering  $\sigma_i$ of $L_i$ witnessing that $\wcol_r(G[L_i]) \leq f(r,k-1)$, 
for each $i\in \{0,1,\dots, q\}$. 
Let $\sigma$ be the ordering of $V(G)$ obtained by concatenating these orderings, that is, 
$\sigma=\sigma_0 \cdots \sigma_q$.

We will show that
\[
|\WReach_r[G,\sigma,v]| \leq (r+1)\cdot f(r,k-1)
\]
for each vertex $v$ in $G$. 
This will complete the induction step.
Let $v\in V(G)$ and suppose $v\in L_i$. 
Since vertices $r$-weakly reachable from $v$ are in distance at most $r$ from $v$ in $G$ 
and since vertices in layers $L_{i+1}, L_{i+2},\ldots$ are greater than $v$ in $\sigma$, 
we have that $\WReach_r[G,\sigma,v| \subseteq \bigcup_{j} L_j$ where 
$j\in\set{i-r,\ldots,i}$ and $j\geq0$.
It is thus enough to show that, for each such $j$ we have
at most $f(r,k-1)$ vertices from $L_j$ that are $r$-weakly reachable from $v$ in $G$. 
Fix such an index $j$. 

Let $w\in L_j$ be $r$-weakly reachable from $v$ in $G$.
Consider a $vw$-path witnessing that $w$ is $r$-weakly reachable from $v$ in $G$, and let $P$ be a shortest such path. 
Observe that $P$ does not enter layer $L_{j-1}$ (if $j>0$). 
Also, once $P$ enters layer $L_j$, it stays in $L_j$. 
Indeed, if not, then there is an $xy$-subpath of $P$ of length at least $2$, with $x, y\in L_j$, and with all internal vertices in the same component of $L_{j+1} \cup \cdots \cup L_q$. 
By Lemma~\ref{lem:shadow_complete}, $xy$ is an edge in $G$, 
and hence $P$ could be shortcutted using the edge $xy$, a contradiction. 

If $j=i$, it follows that $P$ is fully contained in $L_i$, 
so $w\in \WReach_r[G[L_i],\sigma_i,v]$.
Thus by induction $|\WReach_r[G,\sigma,v]\cap L_i| \leq |\WReach_r[G[L_i],\sigma,v]| \leq f(r,k-1)$ as desired. 
If $j<i$, let $C_j \subseteq L_j$ be the shadow of the connected component of $G[L_{j+1}\cup \cdots \cup L_q]$ containing $v$. 
Since $G$ is chordal, $C_j$ induces a clique in $G$ by Lemma~\ref{lem:shadow_complete}. 
Then $P$ enters $L_j$ in some vertex $v'_j \in C_j$, and the $v'_jw$-subpath $Q$ of $P$ has length at most $r-(i-j) \leq r-1$. 
Let $v_j$ be the greatest vertex of $C_j$ in $\sigma_j$. 
Adding $v_j$ at the beginning of $Q$ in case $v'_j\neq v_j$, 
we see that $w$ is $(r-(i-j)+1)$-weakly reachable from $v_j$ in $G[L_j]$. 
(Here we use that $v_j$ is the rightmost vertex of $C_j$ in $\sigma_j$.)
Thus by induction again $|\WReach_r[G,\sigma,v]\cap L_j| \leq f(r,k-1)$ as desired. 

%%%% ----- BEGINNING MORE PRECISE UPPER BOUND
% If we revisit the proof and include an observation that at least $i-j$ steps are needed to move from $L_i$ to $L_j$, 
% we immediately get $|\WReach_r[G,\sigma,v]\cap L_j| \leq f(r -(i-j)+1, k-1)$, for $j\neq i$
% This way we can establish an upper bound $\wcol_r(G) \leq f'(r,k)$ where 
% $f'(r,1) = f(r,1)$ and
% for $k\geq 2$
% \[
% f'(r,k) := f'(r, k-1) +  \sum_{z=1}^r f'(z, k-1).
% \]

% Note that $f'(r,k) = O(...)$ ---TO DO: INSERT ASYMPTOTIC HERE---
% Thus it is enough to show that $f'(r,k)$ is an upper bound on $\wcol_r(G)$. 
% This is done using the above proof but with the following modifications in the inductive case $k\geq 2$, in the subcase where $j<i$. 
% We observe that at least $\ell:=i-j \geq 1$ steps are necessary to reach layer $L_j$ from vertex $v$. 
% It follows that every vertex $w\in L_j$ that is $r$-weakly reachable from $v$ in $G$ is $(r-\ell+1)$-weakly reachable from $v_j$ in $G[L_j]$. 
% (The $+1$ is due to the fact that there might be paths of length exactly $\ell$ from $v$ to the shadow clique $C_j$ but ending in vertices that are not $v_j$.) 
% Using induction on $G[L_j]$, we obtain an upper bound of $f'(r-\ell+1, k-1)$ on the number of such vertices $w$. 
% The upper bound of $f'(r,k)$ on $\wcol_r(G)$ follows.  
%%%% ----- END MORE PRECISE UPPER BOUND

Now we turn to the lower bound. 
Let $g(0,k) := 1$ for all $k\geq1$, let 
$g(r,1) := \lceil\log r\rceil+1$ for all $r\geq1$, and let 
\[
g(r,k):= \sum_{i=0}^r g(i, k-1),  
\]
for all $r\geq1$ and $k\geq 2$.

For each $r \geq 1$ and $k \geq 1$ we construct a graph $G_{r,k}$ with $\stw(G_{r,k})=k$ and $\wcol_r(G_{r,k}) \geq g(r,k)$.  

The graph $G_{r,k}$ is defined inductively on $k$. 
For $k=1$, the graph is a path on $2r$ vertices. 
For $k \geq 2$, the graph $G_{r,k}$ is obtained as follows. 
First, create the root vertex $s$.  
We are going to define the graph layer by layer, 
where layer $i$ corresponds to all vertices at distance exactly $i$ from $s$ in $G_{r,k}$. 
Let $i\in\set{0,1,\ldots,r-1}$ and assume that layer $i$ is already created.
For each vertex $v$ in layer $i$, 
create $g(r,k)$ disjoint copies of $G_{i+1,k-1}$, 
which we call the {\em private copies} of $v$, 
and make all their vertices adjacent to $v$. 
This defines layer $i+1$. 

Let us show that $\stw(G_{r,k}) \leq k$, by induction on $k$. 
For the base case $k=1$, this is clear since $G_{r,1}$ is a path. 
For the inductive case, $k\geq 2$, we construct a $k$-simple tree decomposition of $G_{r,k}$ step by step following the inductive definition of $G_{r,k}$. 
First, create the root bag containing only the root $s$.  
Next, for $i=0, 1, \dots, r-1$ and for each vertex $v$ in layer $i$, 
consider a node $t$ of the tree indexing the tree decomposition we are constructing whose bag contains $v$. 
For each private copy $H$ of $G_{i+1,k-1}$ belonging to $v$, 
consider a $(k-1)$-simple tree decomposition $(T', \mathcal{B'})$ of $H$, 
which exists by induction, 
add $v$ to every bag, 
and link that tree decomposition to the main one by adding an edge connecting one of the nodes of $T'$ to $t$. 
We claim that this operation keeps the main tree decomposition $k$-simple. 
Clearly every new bag has size at most $k+1$. 
For every subset $X$ of $k$ vertices, 
let us show there are at most two bags containing $X$. 
If $X$ is not fully contained in $V(H) \cup \{v\}$, this follows from the fact that the main tree decomposition was $k$-simple before adding the new bags. 
If $X\subseteq V(H) \cup \{v\}$ and $v\not\in X$, 
this follows from the fact that $X$ appears in the same bags as is $(T',\calB')$ and $|X|=k\geq k-1$ so it appears in at most two bags.
If $X\subseteq V(H) \cup \{v\}$ and $v\in X$, then
there are most two bags containing $X-\{v\}$ in the tree decomposition $(T', \mathcal{B'})$ and so it is in our resulting tree-decomposition.

It remains to prove that $\wcol_r(G_{r,k}) \geq g(r,k)$, which we show by induction on $k$ again. 
For the base case $k=1$, this follows from Theorem~\ref{thm:path}.\footnote{We remark that we could even have set $g(r,1) := \lceil\log (r+1)\rceil+1$ thanks to Theorem~\ref{thm:path}, however this does not simplify the proof of the inductive case.}  
For the inductive case, suppose $k\geq 2$. 
Let $\sigma$ be any ordering of $V(G_{r,k})$. 
We define $r+1$ vertices $v_0, v_1, \dots, v_r$ as follows. 
First we set $v_0$ to be the root of $G_{r,k}$.
Then, for $i\in\set{0, \dots,r-1}$, we consider the $g(r,k)$ private copies of $v_{i}$. 
If each copy has at least one vertex before $v_{i}$ in $\sigma$ then we are done, since then there are $g(r,k)$ vertices that are $1$-weakly reachable from $v_{i}$. 
Thus we may assume that some private copy of $v_{i}$ has all its vertices after $v_{i}$ in $\sigma$. 
Consider such a copy, call it $H_{i+1}$, and let $\sigma_{i+1}$ denote the ordering $\sigma$ restricted to $H_{i+1}$. 
Using induction, let $v_{i+1}$ be a vertex of $H_{i+1}$ such that at least $g(i+1,k-1)$ vertices of $H_{i+1}$ are $(i+1)$-weakly reachable from $v_{i+1}$ in $H_{i+1}$ under $\sigma_{i+1}$.  

It follows that for each $i\in\set{0, 1,\dots,r}$, 
the vertex $v_r$ can weakly reach $v_i$ going through $(r-i)$ edges 
and there are $g(i,k-1)$ vertices $i$-reachable from $v_i$.
Thus in total, there are at least $\sum_{i=0}^r g(i, k-1)=g(r,k)$ vertices that are $r$-weakly reachable from $v_r$, as desired. 

It only remains to show that 
\[
  g(r,k) \geq \frac{r^{k-1} \ln r}{k!},
\]
for all $r, k\geq 1$, which we do by induction on $r+k$. 
This is clearly true if $r=1$ or $k=1$. 
Also, if $r=2$, then the claim holds because $g(2, k)\geq 1 \geq \frac{2^{k-1}\ln 2}{k!}$ for $k \geq 2$. 
Next, let us consider the case $r \geq 3$ and $k=2$. 
We have 
\[
g(r, 2) = \sum_{x=0}^r g(x, 1) 
\geq (r+1) + \sum_{x=2}^r \ln x
\geq (r+1) + \int_1^r \ln x \;\mathrm{d}x
\geq \frac{r \ln r}{2}. 
\]
Now we consider the case $r\in \{3, 4\}$ and $k\geq 3$.  
Observe that $g(r,k) \geq g(3,3)\geq 4$. 
On the other hand, in our range of possible values for $r$ and $k$, the function $\frac{r^{k-1} \ln r}{k!}$ is maximized for $r=4$ and $k=3$, in which case its value is $16\ln(4)/6 \simeq 3.697$. 
Hence, it follows that $g(r,k) \geq \frac{r^{k-1} \ln r}{k!}$, as desired.

It remains to consider the case $r\geq 5$ and $k\geq 3$.  
Then, 
\[
  g(r,k) \geq  \sum_{x=2}^r g(x, k-1) 
  \geq    
  \frac{1}{(k-1)!} \sum_{x=2}^r x^{k-2} \ln x  
  \geq \frac{1}{(k-1)!} \int_{1}^r x^{k-2} \ln x \;\mathrm{d}x.  
\]
We have
\[
  \int_{1}^r x^{k-2} \ln x \;\mathrm{d}x 
  = \frac{x^{k-1}\ln x}{k-1} - \frac{x^{k-1}}{(k-1)^2} \Big|_1^r 
  \geq \frac{r^{k-1}\ln r}{k-1} - \frac{r^{k-1}}{(k-1)^2}. 
  %\geq \frac{r^{k-1} \ln r}{k},
\]
Note that $(k-1)\ln r \geq (k-1) \frac{3}{2} \geq k$ since $r\geq 5$ and $k\geq 3$. 
Thus, 
\begin{align*}
\frac{r^{k-1}\ln r}{k-1} - \frac{r^{k-1}}{(k-1)^2} 
&= r^{k-1}\ln r \left(\frac{1}{k-1} - \frac{1}{(k-1)^2\ln r}  \right) \\
&\geq r^{k-1}\ln r \left(\frac{1}{k-1} - \frac{1}{(k-1)k}  \right) \\
&= \frac{r^{k-1} \ln r}{k}. 
\end{align*}
Hence, 
\[
  g(r,k) \geq \frac{1}{(k-1)!} \frac{r^{k-1} \ln r}{k} 
  = \frac{r^{k-1} \ln r}{k!},
\]
as desired. 
\end{proof}

\section{Open problems}
\label{sec:open_problems}

Let us first repeat the open problem about planar graphs discussed in the introduction: 

\begin{problem}
What is the asymptotics of the maximum of $\wcol_r(G)$ when $G$ is planar?
It is known to be $\Omega(r^2\log r)$ and $\Oh(r^3)$. 
We conjecture $\Theta(r^2\log r)$. 
\end{problem}

There are also gaps remaining for classes generalizing planar graphs. 
We mention two problems. 

\begin{problem}
For fixed $k\geq 1$, what is the asymptotics of the maximum of $\wcol_r(G)$ when $G$ has no $K_k$ minor?
It is known to be $\Omega_k(r^{k-2})$ and $\Oh(r^{k-1})$.
\end{problem}

\begin{problem}
For fixed $k\geq s \geq  4$, what is the asymptotics of the maximum of $\wcol_r(G)$ when $G$ has no $K_{s,k}$ minor?
It is known to be $\Omega_k(r^{s})$ and $\Oh_k(r^{s+1})$. Van den Heuvel and Wood~\cite{vdHW18} conjecture $\Theta_k(r^{s})$. 
\end{problem}

We note that graphs $G$ of Euler genus $g$ have no $K_{3,k}$ minor for some $k=\Oh(g)$ and satisfy $\wcol_r(G) \in \Oh_g(r^3)$~\cite{vdHetal17}. 
Van den Heuvel and Wood~\cite{vdHW18} subsequently proved an upper bound of $\Oh_k(r^3)$ for all graphs with no $K_{3,k}$ minor, which motivates their  conjecture above. 

While this paper is focused on weak coloring numbers, 
there are a few other closely related parameters that are good to have in mind when studying weak coloring numbers. 
We conclude this paper with a brief discussion of these parameters and how they relate to weak coloring numbers.  

The {\em $r$-th strong coloring number} $\scol_r(G)$ of a graph $G$ is defined exactly as its weak counterpart, except that now vertex $u$ is \emph{strongly $r$-reachable} from vertex $v$ in an ordering $\sigma$ if $u <_{\sigma} v$ and
there exists an $u$--$v$ path of length at most $r$ such that for every {\em internal} vertex $w$ on the path, $v <_{\sigma} w$. 
Then, 
\[
\scol_r(G) \leq \wcol_r(G) \leq (\scol_r(G))^r, 
\]
where the left inequality is obvious, and the right one was shown by Kierstead and Yang~\cite{KY03}. 
In particular, a class of graphs has bounded weak coloring numbers if and only if it has bounded strong coloring numbers, and thus having bounded strong coloring numbers also captures the notion of bounded expansion. 
However, for specific graph classes, the bounds on strong coloring numbers can be significantly smaller than for weak coloring numbers. 
For instance, $\scol_r(G)\leq 5r+1$ for every planar graph $G$~\cite{vdHetal17}. 
See~\cite{vdHetal17} for more on strong coloring numbers. 

We would like to advertise one intriguing question about strong coloring numbers. 
An important refinement of the notion of bounded expansion for a graph class is that of {\em polynomial expansion}. 
By a result of Dvo{\v{r}}{\'a}k and Norin~\cite{DN16}, for monotone\footnote{meaning closed under subgraphs} classes this is equivalent to having strongly sublinear-size separators, or equivalently, strongly sublinear treewidth: There exists $\epsilon >0$ such that $\tw(G) \in \Oh(n^{1-\epsilon})$ for all $n$-vertex graphs $G$ in the class. 
This includes planar graphs, graphs excluding a $K_k$ minor, intersection graphs of touching balls in $\mathbb{R}^d$, and several other classes of graphs; see in particular~\cite{DMN20} for examples of such classes that are defined geometrically. 
While it is known (and easy to show, see e.g.~\cite{ER18}) that graph classes with strong coloring numbers polynomially bounded in $r$ have polynomial expansion, the converse is open: 

\begin{problem}
Is it true that for every monotone graph class $\mathcal{C}$ with polynomial expansion, there exists $d > 0$ such that, for all integers $r\geq 1$ and all graphs $G \in \mathcal{C}$, we have $\scol_r(G) \in\Oh(r^d)$?
\end{problem}

This problem is discussed in~\cite{ER18, DMN20}.  
We remark that if we replace strong coloring numbers with weak coloring numbers in the above open problem, then the answer is negative: In~\cite{Grohe15}, a class with polynomial expansion and super-polynomial weak coloring numbers is presented. 

A second parameter related to weak coloring numbers is the {\em $p$-centered chromatic number $\chi_p(G)$} of a graph $G$, where $p\geq 1$ is an integer. This is the smallest number of colors in a vertex coloring of $G$ such that, for every connected subgraph $H$ of $G$, either there is a color which appears exactly once among the colors on vertices of $H$, or more than $p$ distinct colors appear on $H$. 
In several respects, this family of parameters is more similar to weak coloring numbers than strong coloring numbers are. 
First, note that 
\[
  \chi(G) = \chi_1(G) \leq  \chi_2(G) \leq \cdots \leq \chi_{\infty}(G) = \td(G),
\]
thus both parameters tend to treedepth, while strong coloring numbers tend to treewidth: $\scol_{\infty}(G)=\tw(G)+1$~\cite{vdHetal17}. 
Second, the known bounds for $\chi_p(G)$ for simple treewidth are similar: It is shown in~\cite{DFMS21} that $\chi_p(G) \in \Oh(p^{k-1}\log p)$ for graphs of simple treewidth at most $k$, and this is tight. 
For planar graphs, the best known upper bound is $O(p^3 \log p)$, while  stacked triangulations achieve $\Omega(p^2 \log p)$~\cite{DFMS21}. 
Thus the gap is almost the same as for weak coloring numbers. 
Interestingly, the approach based on chordal partitions, on which is based the $\Oh(r^3)$ bound for weak coloring numbers~\cite{vdHetal17}, does not seem to work for bounding $\chi_p(G)$. 
Another common trait is that, again, having bounded $p$-centered chromatic number for all $p$ coincide with the notion of bounded expansion, and like for weak coloring numbers, there are classes with polynomial expansion and super-polynomial $p$-centered chromatic numbers~\cite{DJPPP20}. 

Finally, a third related family of parameters are {\em fractional $\td$-fragility rates}, defined as follows. 
Given a positive integer $a$ and a graph $G$, consider the smallest integer $r(G, a)$ such that there exists a probability distribution on the vertex subsets of $G$ with the following two properties: (1) Each vertex $v\in V(G)$ has probability at most $1/a$ of belonging to a random subset sampled from this distribution, and (2)  $\td(G-X) \leq r(G, a)$ for each subset $X$ in the support of the distribution. 
A class of graphs is {\em fractionally $\td$-fragile at rate $r$} if $r(G, a) \leq r(a)$ for all graphs $G$ in the class and all positive integers $a$. 
This parameter was introduced by Dvo{\v{r}}{\'a}k and Sereni~\cite{DS20}. 
Among others they proved an upper bound of $r(a) = \Oh(a^3 \log a)$ for planar graphs, and a bound of $\Oh(a^2 \log a)$ for stacked triangulations which is tight. 
Again, there is a gap remaining for planar graphs, which is similar to that for the $p$-centered chromatic number and for weak coloring numbers. 
It is worth noting that, while all three families of parameters are genuinely different, there are similarities both in their behavior and in the proof ideas so far. 
It is thus natural to expect that if one could narrow the gap for planar graphs for one of these parameters, then this would likely lead to better bounds for the others as well.

\section*{Acknowledgments} 
We thank the two anonymous referees for their helpful comments, which improved the paper. We are particularly grateful to one referee for pointing out an error in an earlier version of the proof for the lower bound in Theorem 2. 
We also thank David Wood for discussions on this topic.

% \bibliographystyle{abbrv}
% \bibliography{bibliography}

\providecommand{\noopsort}[1]{}

\end{document}